\documentclass{article}
\usepackage[utf8]{inputenc}
\usepackage{amsmath,amssymb,amsthm}
\usepackage{graphicx}
\usepackage{graphics}
\usepackage{float}
\usepackage{epstopdf}
\newcommand{\yd}{{y_\delta}}

\newcommand{\xad}{{x_{\alpha,\delta}}}

\newcommand{\xa}{x_{\alpha}}
\newcommand{\xade}{{x_{\alpha,\delta,\eta}}}
\newcommand{\xades}{{x_{\alpha_*,\delta,\eta}}}
\newcommand{\as}{{\alpha_*}}

\newcommand{\xd}{x^\dagger}
\newcommand{\Ad}{A_\eta} 
\newcommand{\DA}{\Delta A}
\newcommand{\psim}{\bar{\psi}}

\newcommand{\ei}{e_{\text{per}}}
\newcommand{\bei}{\bar{e}_{\text{per}}}
\newcommand{\eii}{e_{\text{rel}}}
\newcommand{\eopt}{e_{\text{opt}}}
\newcommand{\val}{\theta}

\newtheorem{proposition}{Proposition}
\newtheorem{lemma}{Lemma}
\newtheorem{theorem}{Theorem}

\newtheorem*{remark}{Remark}

\newcommand{\Cpsima}{D}
\newcommand{\remove}[1]{}

\DeclareMathOperator*{\argmin}{argmin}

\usepackage[symbol]{footmisc}

\providecommand{\keywords}[1]{\textbf{Keywords:} #1}

\title{Semi-Heuristic Parameter Choice Rules for Tikhonov Regularisation with Operator Perturbations}
\author{Uno H\"amarik\thanks{Institute of Mathematics and Statistics, University of Tartu, J. Liivi 2, 50409, Tartu, Estonia (uno.hamarik@ut.ee \& urve.kangro@ut.ee)}, Urve Kangro\footnotemark[1], Stefan Kindermann\thanks{Industrial Mathematics Institute, Johannes Kepler University Linz, Altenbergerstra{\ss}e 69, 4040, Linz, Austria (kindermann@indmath.uni-linz.ac.at \& kemal.raik@indmath.uni-linz.ac.at)}, Kemal Raik\footnotemark[2]}

\begin{document}
\maketitle
\begin{abstract}
We study the choice of the regularisation parameter for linear ill-posed problems in the presence of 
data noise and operator perturbations, for which a bound on the operator error is known but 
the data noise-level is unknown.  We introduce a new family of semi-heuristic parameter choice rules 
that can be used in the stated  scenario. We prove convergence of the new rules and 
provide numerical experiments that indicate an improvement compared to standard 
heuristic rules. 
\end{abstract}

\keywords{regularisation, parameter choice rules, ill-posed problems, inverse problems, operator perturbations}

\section{Introduction}
The framework of this study are linear ill-posed problems with 
noisy data and an operator perturbation. The basis is the following 
well-known abstract model equation  
\begin{equation}
A x = y, \label{illposedproblem}
\end{equation}
with $A\in\mathcal{L}(X,Y)$, a continuous
linear operator acting between two Hilbert spaces with non-closed range, 
which, for simplicity, is furthermore
assumed to be injective. The contents of this paper remain valid for $A$ not injective, however.
In the following we denote by $\xd$ the minimum-norm least squares solution 
of~\eqref{illposedproblem}.

We assume that both the data and the operator are perturbed, i.e., 
\[ \yd = y  + e, \qquad \|e\|  \leq \delta, \] 
where $e$ denotes data error and $\delta$ the noise level.
The model is further corrupted as a consequence of the operator error
\[ \Ad = A + \DA, \qquad \|\DA\|  \leq \eta,\]
where $\DA$ is a bounded operator perturbation with  magnitude bounded by $\eta$.
We refer the reader to \cite{RausHam,tautenhahn,Pereverzyev,iteration} for further discussion regarding ill-posed problems with operator perturbations. There, one may find discussion of the \textit{generalised discrepancy principle} which is an a-posteriori parameter choice rule requiring knowledge of both the data and operator noise levels.

The specific  situation that we consider here, which is 
often met in practical situations, is that 
 we suppose we have knowledge of the operator noise level, i.e.,  
we assume {\em  $\eta$ known}, but  we do {\em not know} the level 
of the data error, $\delta$. 
 
It is an obvious fact  that such problems require regularisation, and  
for this study, we employ  Tikhonov regularisation: 
\begin{align}\label{tikreg}
 \xade &= (\Ad^* \Ad + \alpha I)^{-1} \Ad^* \yd, 
 \end{align} 
 with a regularisation parameter $\alpha$ and only the mentioned 
 bound on the perturbed operator available \cite{tikhon,engl}. 
 For later reference, we furthermore define two auxiliary functions
 \begin{align*}
 \xad &= (A^* A + \alpha I)^{-1} A^* \yd,  \\
 \xa &=  (A^* A + \alpha I)^{-1} A^* y.
 \end{align*} 
The choice of the regularisation parameter (here $\alpha$) is an important 
and delicate issue for any regularisation method.  The overall aim is  to obtain  convergence 
of the computed solution $\xade$ to the exact solution when 
all error terms $\delta,\eta$ vanish:
\begin{equation}\label{convergence}
\|\xade -\xd\| \to 0 \qquad \text{ as } \delta \to 0, \eta \to 0. \end{equation}
To this end, one must select a rule for choosing the appropriate parameter $\alpha$. 
If $\delta$ was known, there are parameter choice rules that provide such a convergence and even rates of convergence. 

However, when $\delta$ is unknown, as assumed 
in this paper, the choice of $\alpha$ is less standard and has 
to be done by {\em heuristic} rules, i.e., $\alpha$ is selected 
only depending on the given noisy data $\yd$ without explicit
reference to $\delta$. 
The best-understood 
methods in this class are the minimisation-based ones, on which we
build our methods as well. 

The novelty of this paper is the use and analyis 
of {\em semi-heuristic parameter} 
choice rules, where an assumed known operator bound, $\eta$, 
is combined with the $\delta$-free heuristic rules. 
This paper is organised as follows: in Section~\ref{sec:2}, 
we introduce and motivate the use of semi-heuristic rules,
and  we provide a convergence analysis. 
In Section~\ref{sec:4}, we illustrate the theory by 
numerical results. Additionally, whilst the reader may be referred to \cite{LuPerTaut} for the performance of the quasioptimality rule in the presence of a noisy operator, the performance of other heuristic rules in this setting has yet to be investigated. We subsequently shed new light on this as a byproduct of our comparison with the semi-heuristic rules.


\section{Semi-heuristic parameter choice rules}\label{sec:2}
As explained in the introduction, heuristic rules are employed 
in the case of unknown noise level $\delta$ (without operator 
perturbations).  

Minimisation-based heuristic rules entail   
 minimising a functional $\psi(\alpha,A,\yd)$
with 
\[\psi:[0,\alpha_{\text{max}}]\times\mathcal{L}(X,Y)\times Y\to\mathbb{R}\cup\{\infty\}.\] 
The regularisation parameter is then selected as 
\begin{equation}\label{defheu} \as = \argmin_{\alpha\in[0,\alpha_{\text{max}}]} \psi(\alpha,A,y_\delta),
\end{equation}
which obviously does not depend on $\delta$. 

Our methods  use  the following classical examples of 
$\psi$-functionals (see, e.g., \cite{convergenceanalysis}):
\begin{itemize}
\item The heuristic discrepancy functional
\begin{equation}\label{HRfun} 
\psi_{HD}(\alpha,A,y_\delta):=\frac{\|A x_{\alpha,\delta,\eta}-y_\delta\|}{\sqrt{\alpha}}.
\end{equation}
\item The Hanke-Raus functional
\begin{equation}\label{MHRfun} 
\psi_{HR}(\alpha,A,y_\delta):=
\|(A A^\ast+\alpha I)^{-1/2}(A x_{\alpha,\delta}-y_\delta)\|.
\end{equation} 
\item The quasioptimality functional
\begin{equation}\label{QOfun}  
\psi_{QO}(\alpha,A,y_\delta):=\left\|\alpha\frac{d}{d\alpha}
x_{\alpha,\delta}\right\|.
\end{equation}
\end{itemize}
In the linear case (as in the present setting), we can write these in terms of filter functions $\Psi$:
\[  \psi(\alpha,A,\yd)   = \|\Psi(\alpha,A)\yd\|. \]
In particular, the following filter functions
\begin{align*}
  \Psi_{HD}(\alpha,A)& := \sqrt{\alpha} (A A^* + \alpha I)^{-1}, \\ 
 \Psi_{HR}(\alpha,A) &:= \alpha (A A^* + \alpha I)^{-3/2},  \\ 
 \Psi_{QO}(\alpha,A) &:= \alpha (A^*A + \alpha I)^{-2}A^*
 \end{align*} 
may be associated with the heuristic discrepancy, Hanke-Raus
and quasioptimality rules, respectively \cite{hankeraus,hampalmraus}.

Meanwhile, a convergence theory for such heuristic parameter choice 
rules has also been established. A central ingredient is that a 
noise condition has to be postulated in order for these methods to work. 
Such a condition, which links the operator, the data error and 
the solution, provides a deep understanding when such methods are successful. 
Essentially, a noise condition is satisfied if the data noise is sufficiently
irregular. More precisely, we assume that for the specific choice 
of $\psi$, there exists a constant $C_{nc}$ such that for all 
given noisy data $\yd$ and  exact data $y$, the following inequality 
is satisfied 
\[ \|\xad - \xa \| \leq C_{nc} \|\Psi(\alpha,A )(\yd-y)\|   \qquad \forall \alpha \in [0, \alpha_{\text{max}}], \]
or, equivalently, 
\begin{equation}\label{nc} 
\|(A^* A + \alpha I)^{-1}A^*(\yd -y)\| \leq C_{nc} \psi(\alpha,A,\yd -y)
\qquad \forall \alpha \in [0, \alpha_{\text{max}}]. 
\end{equation} 
See \cite{convergenceanalysis,kinderquasi} for a more detailed discussion which gives more explicit representations and justification for the noise conditions using spectral theory.

Such inequalities are satisfied for the above mentioned classical 
$\psi$-functionals for many realistic instances of \textquotedblleft data noise\textquotedblright, e.g., 
for white or coloured noise  \cite{kinderquasi,KiPePi}.

The fact that prohibits the direct use of a minimisation-based 
rule with, say, a functional 
of the form $\psi(\alpha,\Ad,y_\delta),$
in our case,  is that we are faced with  an additional operator error, which 
is usually not  random or irregular and hence it would be unrealistic 
to assume that for the operator perturbation an analogous inequality holds.  
The remedy is to employ  a {\em modified functional}, which uses the 
noisy operator $\Ad$, but is designed to emulate a functional for the 
unperturbed operator. Therefore, we propose to subtract from the 
classical $\psi$-functional a term which should behave 
approximately like $\psi(\alpha,A,y_\delta)-\psi(\alpha,\Ad,y_\delta).$

Thus, the semi-heuristic rule is of the following type:
firstly, the regularisation parameter $\alpha = \as$ is chosen similarly to 
\eqref{defheu} by a minimisation of 
\begin{equation}\label{defpsim}
\psim(\alpha,\Ad,\yd)  :=  \psi(\alpha,\Ad,\yd)  - R(\alpha,\Ad,\yd,\eta)   
\end{equation} 
with $\psi$ being one of the classical heuristic functionals above, 
\eqref{HRfun}--\eqref{QOfun},
and 
a functional $R$ (to be specified below) that compensates the operator error. 
Secondly, to guarantee a minimiser and convergence of the regularized solution, we restrict the minimisation to 
an interval $[\gamma,\alpha_{max}]$, where the {\em  lower bound} $\gamma$
is selected depending on $\eta$ (but not on $\delta$):
\begin{equation}\label{semiheu}
\alpha_* := \alpha(\eta,\yd) :=  \argmin_{\alpha \in [\gamma,\alpha_{\text{max}}]} \psim(\alpha,\Ad,\yd), 
\qquad \gamma = \gamma(\eta)>0. 
\end{equation}
In this way, 
we combine heuristic rules with an $\eta$-based choice. 
 
We propose and investigate  two classes of compensating functionals $R$ labelled as 
(SH1) and (SH2).
 \begin{align}
 &\psim(\alpha,\Ad,\yd)  =  \psi(\alpha,A_\eta,y_\delta)  - \Cpsima \eta \|\xade\|,& \qquad 
 & \text{(SH1)} 
 \label{psimx}
 \\
 &\psim(\alpha,\Ad,\yd)  =  \psi(\alpha,A_\eta,y_\delta)  - \Cpsima \frac{\eta}{\sqrt{\alpha}}. 
 & \qquad 
 & \text{(SH2)} 
 \label{psima} 
\end{align}
The constant $D$ should be chosen to obtain a scaling invariant functional. For instance, in the case of (SH1), we may choose $D\sim 1/\|A\|$ and for (SH2), as $D\sim \|y\|/\|A\|$. Note that the error estimate we derive is sharpest with the choice \eqref{psimx}, although the numerical results are comparable.




The main goal of our analysis is to show convergence \eqref{convergence}
for such  semi-heuristic parameter choice rules.

\subsection{Error estimates with operator noise}
In the following, we assume the presence 
of operator noise. The following auxiliary result
will be utilised extensively.
\begin{lemma}\label{lem:opest}
Let $\alpha\in[0,\alpha_{\text{max}}]$ 
and for $p \in \{0,\frac{1}{2},1\}$, define 
\[ B_{\eta,p}:= 
\begin{cases} (A^\ast_\eta A_\eta)^p & \text{if }
p \in \{0,1\}, \\
A^\ast_\eta  & \text{if } p = \frac{1}{2}, 
\end{cases} 
\qquad 
B_{p}:= 
\begin{cases} (A^\ast A)^p   & \text{if }
p \in \{0,1\}, \\
A^\ast  & \text{if } p = \frac{1}{2}. 
\end{cases} 
\]
Let $ \hat{B}_{\eta,p}$ and $ \hat{B}_{p}$ be the operators we get from $ B_{\eta,p}$ and $B_{p}$ by changing the roles of the operators $A_\eta \leftrightarrow A^\ast_\eta$ and $A \leftrightarrow A^\ast$, respectively.
Then for 
$p \in \{0,\frac{1}{2},1\}$ and
$q \in \{-1,-\frac{3}{2},-2\}$, 
there exist positive constants $C_{p,q}$
such that
\begin{equation}
    \left\|(A^\ast_\eta A_\eta+\alpha I)^{q}
    B_{\eta,p}-(A^\ast A+\alpha I)^{q} B_{p}\right\|
    \le C_{p,q}\frac{\eta}{\alpha^{\frac{1}{2}-p-q}}. \label{operatorerrorestimates}
\end{equation}
\begin{equation}    
\left\|(A_\eta A^\ast_\eta+\alpha I)^{q}
    \hat{B}_{\eta,p}-(A A^\ast+\alpha I)^{q} \hat{B}_{p}\right\|
    \le C_{p,q}\frac{\eta}{\alpha^{\frac{1}{2}-p-q}}. \label{operatorerrorestimates2}
\end{equation}
\end{lemma}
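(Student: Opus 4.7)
The plan is to combine the second resolvent identity with standard moment inequalities from functional calculus. Throughout, write $T := A^*A + \alpha I$ and $T_\eta := \Ad^*\Ad + \alpha I$. The algebraic key is the split
\[ T_\eta - T = \Ad^*\Ad - A^*A = \Ad^*\DA + (\DA)^* A, \]
which exhibits the perturbation as a sum of two terms, each containing exactly one factor $\DA$ (of norm at most $\eta$) paired with one of $\Ad^*$ or $A$; these auxiliary factors interact well with the resolvents via functional calculus.

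First I would record the baseline moment inequalities: for all admissible $(p,q)$,
\[ \|T^q B_p\|,\ \|T_\eta^q B_{\eta,p}\| \le c_{p,q}\,\alpha^{q+p}, \]
which follow from $\sup_{\lambda \ge 0}\lambda^s(\lambda+\alpha)^r \le c\,\alpha^{s+r}$ (for $s \ge 0$, $r \le 0$, $s+r \le 0$) applied via the spectral theorem, and in particular yield $\|AT^{-1/2}\| \le 1$ and $\|\Ad T_\eta^{-1/2}\| \le 1$. Then the case $q = -1$ follows from the second resolvent formula
\[ T_\eta^{-1} - T^{-1} = -T_\eta^{-1}\bigl[\Ad^*\DA + (\DA)^* A\bigr] T^{-1} \]
together, when $p > 0$, with the adjustment
\[ T_\eta^{-1} B_{\eta,p} - T^{-1} B_p = (T_\eta^{-1} - T^{-1}) B_p + T_\eta^{-1}(B_{\eta,p} - B_p), \]
so that one factor $\eta$ is extracted and the remaining operators are controlled by the baseline bounds. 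The case $p = 1$ is cleaner via $T^{-1}A^*A = I - \alpha T^{-1}$, which yields $T_\eta^{-1}\Ad^*\Ad - T^{-1}A^*A = -\alpha(T_\eta^{-1} - T^{-1})$.

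For $q = -2$ I would iterate via the telescoping
\[ T_\eta^{-2} - T^{-2} = (T_\eta^{-1} - T^{-1})T_\eta^{-1} + T^{-1}(T_\eta^{-1} - T^{-1}), \]
which multiplies the $q = -1$ bound by an additional factor $1/\alpha$. For the fractional exponent $q = -3/2$ I would use the integral representation
\[ T^{-3/2} = \tfrac{4}{\pi}\int_0^\infty (T + s^2)^{-2}\, ds, \]
combined with the $q = -2$ estimate applied pointwise with $\alpha$ replaced by $\alpha + s^2$; after the substitution $s = \sqrt{\alpha}\,t$, the resulting $s$-integral is a finite absolute constant times $\alpha^{p-2}$, matching the exponent $\tfrac12 - p - q = 2 - p$. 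Finally, estimate \eqref{operatorerrorestimates2} is obtained by applying \eqref{operatorerrorestimates} with $A, \Ad$ replaced by $A^*, \Ad^*$; this preserves the hypothesis $\|\DA\| \le \eta$ and converts each $B_{\cdot}$ into the corresponding $\hat B_{\cdot}$.

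The main obstacle is uniform bookkeeping: one must verify the correct power of $\alpha$ simultaneously for all nine $(p,q)$ pairs, and in each case pick the split of $T_\eta - T$ and the order of composition so that every factor carries the largest admissible negative power of $\alpha$. The fractional case $q = -3/2$ is the most technically delicate, since it does not reduce algebraically to the integer cases and requires the detour through the integral representation.
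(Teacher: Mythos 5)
Your proof is correct and follows essentially the same route as the paper: a second-resolvent-identity decomposition that extracts a single factor of $\DA$ for $q=-1$, a telescoping/recursion to reach the other integer power, an integral representation of the fractional power of the resolvent for $q=-3/2$, and the adjoint swap $A\leftrightarrow A^*$ for the second estimate. The only cosmetic differences are that you represent $(A^*A+\alpha I)^{-3/2}$ directly as $\tfrac{4}{\pi}\int_0^\infty (A^*A+\alpha I+s^2)^{-2}\,ds$ and reuse the $q=-2$ bound, whereas the paper factors out $(A^*A+\alpha I)^{-1}$ and applies the Balakrishnan-type formula to the $-\tfrac12$ power, and that your shortcut $T^{-1}A^*A=I-\alpha T^{-1}$ for $p=1$ replaces the paper's direct verification; both yield the stated powers of $\alpha$.
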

\begin{proof}
We prove \eqref{operatorerrorestimates}, this gives \eqref{operatorerrorestimates2} changing the roles of the operators $A_\eta \leftrightarrow A^\ast_\eta$ and $A \leftrightarrow A^\ast$.
We recall the elementary 
estimates 
\begin{equation}\label{est} \|(A^\ast A +\alpha I)^{-1}\|
\leq \frac{1}{\alpha},
\quad 
\|(A^\ast A +\alpha I)^{-1}A^\ast\|
\leq \frac{1}{2 \sqrt{\alpha}},
\quad 
\|(A^\ast A +\alpha I)^{-1}A^\ast A\|
\leq 1,
\end{equation}
which also hold with $A$ and $A^\ast$ replaced by 
$A_\eta$ and $A_\eta^\ast$, respectively. 
For $p \in \{0,1\}$, it follows from 
some algebraic manipulations,
the fact that 
$B_p,B_{\eta.p}$ commute with the 
inverses below, 
and the previous estimates
that
\begin{align*}
&(A_\eta^\ast A_\eta +\alpha I)^{-1}B_{\eta,p}
- 
B_p (A^\ast A +\alpha I)^{-1}\\
&= 
(A_\eta^\ast A_\eta +\alpha I)^{-1} \left[
B_{\eta,p} (A^\ast A  + \alpha I) 
- (A_\eta ^\ast A_\eta +\alpha I) B_p
\right]
(A^\ast A +\alpha I)^{-1}\\
& = 
(A_\eta^\ast A_\eta +\alpha I)^{-1}\left[ B_{\eta,p} A^\ast A - 
A_\eta^\ast A_\eta B_p \right] 
(A^\ast A +\alpha I)^{-1} \\
& \qquad +
\alpha (A_\eta^\ast A_\eta +\alpha I)^{-1}
\left[ B_{\eta,p} - B_p  \right]  (A^\ast A +\alpha I)^{-1}. 
\end{align*}
In the case $p = 0$ and 
$B_{\eta,0}= B_0 = I$, we find
\[ B_{\eta,0} A^\ast A - 
A_\eta^\ast A_\eta B_0
= (A^\ast-A_\eta^\ast)A + 
A_\eta^\ast(A -A_\eta), \]
which, using \eqref{est}, gives $C_{0,-1}= 1$.
Similarly, we can 
prove that 
$C_{1,-1}= 1$. 
For the case $p = \frac{1}{2}$, 
if $B_{\eta,p} = A_\eta^\ast$ and $B_{p} = A^\ast$, 
we obtain 
$C_{\frac{1}{2},-1}= \frac{5}{4}$
with minor modifications noting that 
$(A^\ast A +\alpha I)^{-1}A^\ast
= A^\ast (AA^\ast +\alpha I)^{-1}$. 
The other cases of $q$ follow
in a similar manner by 
\begin{align*}
&(A_\eta^\ast A_\eta +\alpha I)^{q}B_{\eta,p}
- 
B_p (A^\ast A +\alpha I)^{q}\\
&= 
(A_\eta^\ast A_\eta +\alpha I)^{q+1} \left[
(A_\eta^\ast A_\eta +\alpha I)^{-1}
B_{\eta,p} -
B_p (A^\ast A +\alpha I)^{-1}
\right]  \\
&+
\left[
(A_\eta^\ast A_\eta +\alpha I)^{q+1}
 -
 (A^\ast A +\alpha I)^{q+1}\right] B_p(A^\ast A +\alpha I)^{-1},
\end{align*}
and by  using \eqref{est} 
and the result for $q = -1$. 
For $q = -\frac{3}{2}$, we employ an additional identity 
from semigroup operator calculus \cite{krasnosleski}, 
\begin{align*}
&(A_\eta^\ast A_\eta +\alpha I)^{-\frac{1}{2}}-
(A^\ast A +\alpha I)^{-\frac{1}{2}}\\
& \quad =
\frac{\sin(\frac{\pi}{2})}{\pi}\int_0^\infty
t^{-\frac{1}{2}} \left[
(A_\eta^\ast A_\eta +(\alpha+t) I)^{-1} - 
(A^\ast A +
(\alpha+t) I)^{-1}\right] \,\mathrm{d}t,
\end{align*} 
which leads to 
\begin{align*}
\|(A_\eta^\ast A_\eta +\alpha I)^{-\frac{1}{2}}-
(A^\ast A +\alpha I)^{-\frac{1}{2}}\| &\leq 
 \frac{C_{0,-1} \eta}{\pi} 
\int_0^\infty \frac{1}{\sqrt{t}(\alpha +t)^\frac{3}{2}} \,\mathrm{d}t\\
& \leq  \frac{2 C_{0,-1} }{\pi}
\frac{\eta}{\alpha},
\end{align*}
thereby finishing the proof. 
\end{proof}
As a consequence of the above lemma, we obtain some useful bounds.
\begin{lemma}\label{helplemma} 
For any of the parameter choice functionals 
$\psi \in \{\psi_{HD},\psi_{HR}, \psi_{QO} \}$
(see \eqref{HRfun}--\eqref{QOfun}), 
any  $\alpha \in [0,\alpha_{\text{max}}]$ 
we have 
\begin{equation}\label{psi_eta}
\psi(\alpha,A_\eta,A_\eta \xd) 
\leq  C_{p,q}\frac{\eta \|\xd\|}{\sqrt{\alpha}} + 
\psi(\alpha,A,A \xd), 
\end{equation}
\begin{equation}\label{psiupper}
\psi(\alpha,A_\eta,y_\delta)
\leq \frac{\delta}{\sqrt{\alpha}} + 
(1+C_{p,q})\frac{\eta \|\xd\|}{\sqrt{\alpha}}
+ \psi(\alpha,A,A \xd),
\end{equation}
with the constants $C_{p,q}$ from 
Lemma~\ref{lem:opest}: 
$p = \frac{1}{2},$ $q= -1$
for the heuristic discrepancy, 
 $p = \frac{1}{2},$ $q= -\frac{3}{2}$
for the  Hanke-Raus, and
 $p = 1,$ $q= -2$ for the 
quasioptimality functionals, respectively.
\end{lemma}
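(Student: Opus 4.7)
The plan is to exploit the fact that each of the three functionals admits the filter representation
$\psi(\alpha,A,y) = \|\Psi(\alpha,A)y\|$, and then to translate the claimed bounds into operator norm estimates of the form $\|\Psi(\alpha,A_\eta)A_\eta - \Psi(\alpha,A)A\|$, which I can read off directly from Lemma~\ref{lem:opest} with a suitable choice of the exponents $(p,q)$. Concretely, multiplying by $A_\eta$ (resp.\ $A$) on the right makes $\Psi_{HD}(\alpha,A_\eta)A_\eta$ and $\Psi_{HR}(\alpha,A_\eta)A_\eta$ fit the pattern of \eqref{operatorerrorestimates2} with $\hat B_{\eta,1/2}=A_\eta$ and $q=-1$ resp.\ $q=-3/2$, while $\Psi_{QO}(\alpha,A_\eta)A_\eta = \alpha(A_\eta^*A_\eta+\alpha I)^{-2}A_\eta^*A_\eta$ fits \eqref{operatorerrorestimates} with $p=1$, $q=-2$. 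In each case the scalar prefactor ($\sqrt\alpha$, $\alpha$, $\alpha$) multiplied by the bound $\eta/\alpha^{1/2-p-q}$ of Lemma~\ref{lem:opest} collapses to exactly $\eta/\sqrt\alpha$, producing the constant $C_{p,q}$ of the statement.

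For the first inequality \eqref{psi_eta} I would simply write
$\psi(\alpha,A_\eta,A_\eta\xd) = \|\Psi(\alpha,A_\eta)A_\eta\xd\|$,
insert $\pm \Psi(\alpha,A)A\xd$, use the triangle inequality, and apply the operator estimate described above to the first piece, while the second piece is by definition $\psi(\alpha,A,A\xd)$.

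For \eqref{psiupper} I would use the decomposition
\[
y_\delta = (y_\delta-y) + (A-A_\eta)\xd + A_\eta \xd,
\]
and apply $\Psi(\alpha,A_\eta)$ together with the triangle inequality. The term involving $y_\delta-y$ is controlled by the uniform filter bound $\|\Psi(\alpha,A_\eta)\| \le 1/\sqrt\alpha$ (which, using \eqref{est}, is immediate for $\psi_{HD}$ and $\psi_{HR}$, and for $\psi_{QO}$ follows after splitting $(A_\eta^*A_\eta+\alpha I)^{-2}A_\eta^* = (A_\eta^*A_\eta+\alpha I)^{-1}\cdot A_\eta^*(A_\eta A_\eta^* + \alpha I)^{-1}$) together with $\|y_\delta - y\|\le \delta$, producing $\delta/\sqrt\alpha$. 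The operator perturbation term contributes $\|\Psi(\alpha,A_\eta)\|\,\eta\,\|\xd\| \le \eta\|\xd\|/\sqrt\alpha$. The last term is bounded by \eqref{psi_eta}, contributing the remaining $C_{p,q}\eta\|\xd\|/\sqrt\alpha + \psi(\alpha,A,A\xd)$; summing yields exactly the claimed constant $1+C_{p,q}$.

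The only genuinely delicate bookkeeping is pairing each functional with the correct $(p,q)$ so that the exponent $\tfrac12 - p - q$ in Lemma~\ref{lem:opest} combines with the prefactor of $\Psi$ to give the uniform rate $\eta/\sqrt\alpha$; once this matching is done, and once one verifies that multiplying the filter by $A_\eta$ indeed produces one of the admissible structures $B_{\eta,p}$ or $\hat B_{\eta,p}$, the entire proof is essentially two applications of the triangle inequality.
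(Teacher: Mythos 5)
Your proposal is correct and follows essentially the same route as the paper: inequality \eqref{psi_eta} is obtained from the filter representation together with \eqref{operatorerrorestimates} and \eqref{operatorerrorestimates2} (with exactly the $(p,q)$ pairings you identify), and \eqref{psiupper} uses the identical splitting $y_\delta=(y_\delta-y)+(A-A_\eta)\xd+A_\eta\xd$ with the uniform bound $\psi(\alpha,A_\eta,z)\le\|z\|/\sqrt{\alpha}$. You merely spell out details the paper leaves implicit; no gap.
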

\begin{proof}

%

The inequality \eqref{psi_eta} follows from \eqref{operatorerrorestimates} and \eqref{operatorerrorestimates2}, the inequality \eqref{psiupper} from \eqref{psi_eta} and from the inequalities
\begin{align*}  \psi(\alpha,A_\eta,y_\delta)
&\leq \psi(\alpha,A_\eta,y_\delta-y)
+\psi(\alpha,A_\eta,(A-A_\eta)\xd) +
\psi(\alpha,A_\eta,A_\eta \xd) 
\\
&\leq \frac{\delta}{\sqrt{\alpha}} 
+ \frac{\eta \|\xd\|} {\sqrt{\alpha}} 
+ \psi(\alpha,A_\eta,A_\eta \xd).
\end{align*}  
%
\end{proof}
We remark that the term 
$\psi(\alpha,A,A \xd)$ converges to $0$ as 
$\alpha \to 0$; see, e.g.,~\cite{kinderquasi}. 
Furthermore, if $\xd$ additionally satisfies a source 
condition \cite{engl}, then the expression can be bounded 
by a convergence rate of order $\alpha$ (with some exponent depending on the source condition) that 
agrees with the standard 
rate for the approximation error 
$\|\xa -\xd\|$.

\subsection{Convergence}
Suppose that $\alpha_*$ is the selected parameter by the proposed parameter choice rules with the operator noise \eqref{semiheu}. 
In the following lemma, we show that for such a choice of parameter, it follows that $\as \to 0$ if all noise (with respect to both the data and the operator) vanishes:
\begin{lemma}\label{lemma3}
Let $\alpha_\ast$ be selected as above, i.e., \eqref{semiheu}, 
with $\psim$ as in \eqref{psimx} or \eqref{psima} and 
$\psi \in \{\psi_{HD},\psi_{HR},\psi_{QO}\}$.
Suppose there exist positive constants (not necessarily equal which we denote universally by $C$) such that $\|y_\delta\|\ge C$ for $\psi\in\{\psi_{HD},\psi_{HR}\}$ and $\|A_\eta^\ast y_\delta\|\ge C$ for $\psi=\psi_{QO}$.

If $\gamma = \gamma(\eta)$ is chosen such that 
$\frac{\eta}{\sqrt{\gamma}}\to 0$ as $\eta \to 0$
then 
\[\alpha_\ast\to 0\] 
as $\delta,\eta\to 0$.
\end{lemma}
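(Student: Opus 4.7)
My plan is to argue by contradiction. Suppose $\alpha_*$ does not tend to $0$, so that along some subsequence $(\delta_n,\eta_n)\to(0,0)$ one has $\alpha_*\ge\alpha_0>0$. I will sandwich $\psim(\alpha_*,A_\eta,y_\delta)$ between an upper bound that tends to $0$ and a lower bound that stays uniformly positive along this subsequence, obtaining a contradiction that forces $\alpha_*\to 0$.

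For the upper bound I would exploit minimality of $\alpha_*$. For any admissible test parameter $\bar\alpha\in[\gamma(\eta),\alpha_{\text{max}}]$,
\[\psim(\alpha_*,A_\eta,y_\delta)\le\psim(\bar\alpha,A_\eta,y_\delta)\le\psi(\bar\alpha,A_\eta,y_\delta),\]
since the subtracted term in both \eqref{psimx} and \eqref{psima} is non-negative. Feeding this into Lemma~\ref{helplemma} produces a bound of the schematic form $\frac{\delta+(1+C_{p,q})\eta\|\xd\|}{\sqrt{\bar\alpha}}+\psi(\bar\alpha,A,A\xd)$. The challenge is to choose $\bar\alpha$ so that \emph{all three} terms vanish as $(\delta,\eta)\to 0$: the last tends to $0$ with $\bar\alpha$ by the remark following Lemma~\ref{helplemma}; the $\eta$-term is handled by the standing hypothesis $\eta/\sqrt{\gamma(\eta)}\to 0$ as long as $\bar\alpha\ge\gamma(\eta)$; but the $\delta$-term demands $\bar\alpha$ be large relative to $\delta$. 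A concrete choice such as $\bar\alpha:=\max\{\gamma(\eta),\delta^{2/3}\}$ meets all three demands simultaneously and yields $\psim(\alpha_*,A_\eta,y_\delta)\to 0$.

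For the matching lower bound along the hypothesised subsequence I would use the standing assumptions $\|y_\delta\|\ge C$ (respectively $\|A_\eta^* y_\delta\|\ge C$ for $\psi_{QO}$) together with elementary spectral estimates such as
\[\psi_{HD}(\alpha,A_\eta,y_\delta)\ge\frac{\sqrt{\alpha}\,\|y_\delta\|}{\|A_\eta\|^2+\alpha},\]
and the obvious analogues for $\psi_{HR}$ and $\psi_{QO}$. Because $\alpha_*$ lies in the compact interval $[\alpha_0,\alpha_{\text{max}}]$ and $\|A_\eta\|$ stays bounded as $\eta\to 0$, these give $\psi(\alpha_*,A_\eta,y_\delta)\ge c>0$ uniformly. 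The subtracted term is then shown to vanish: in (SH1), the standard estimate $\|\xades\|\le\|y_\delta\|/(2\sqrt{\alpha_0})$ yields $D\eta\|\xades\|\to 0$, whereas in (SH2), $D\eta/\sqrt{\alpha_*}\le D\eta/\sqrt{\gamma(\eta)}\to 0$ directly from the hypothesis on $\gamma$. Hence $\psim(\alpha_*,A_\eta,y_\delta)\ge c/2$ for sufficiently small noise, contradicting the upper bound.

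The main obstacle I anticipate is precisely the construction of the test parameter $\bar\alpha$: since $\delta$ is unknown and the lower endpoint $\gamma(\eta)$ of the admissible interval depends only on $\eta$, the naive choice $\bar\alpha=\gamma(\eta)$ controls the $\eta$-term but cannot handle the $\delta$-term. Coupling $\bar\alpha$ to both $\delta$ and $\eta$ is the only non-routine ingredient; the rest reduces to case-checking the three heuristic functionals using spectral bounds of the same kind already employed in Lemmas~\ref{lem:opest}--\ref{helplemma}.
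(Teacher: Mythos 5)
Your proof is correct and follows essentially the same route as the paper's: the same spectral lower bounds $\psi(\alpha,A_\eta,\yd)\ge C\alpha^t$ derived from the assumptions $\|\yd\|\ge C$ (resp.\ $\|A_\eta^*\yd\|\ge C$), the same upper bound via minimality combined with Lemma~\ref{helplemma}, and the same key idea of a test parameter coupled to both $\delta$ and $\gamma$ (the paper inserts $\alpha=\delta+\gamma$ where you use $\max\{\gamma,\delta^{2/3}\}$). The only difference is presentational: you package the argument as a contradiction along a subsequence, while the paper derives the quantitative inequality $C\alpha_*^t\le o(1)$ directly, handling the compensating term by keeping it on the left-hand side rather than showing it vanishes separately.
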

\begin{proof}
At first, we show some lower bounds 
for the parameter choice functionals. 
 If $\|\yd\|\geq c_0$ and $\|A_{\eta}^\ast\yd\| \geq c_0$ then there exist constants such that
\begin{equation}\label{lower}
\psi(\alpha,A_\eta,\yd) \geq 
\begin{cases} 
C \sqrt{\alpha} & \text{ if } 
\psi =  \psi_{HD}, \\
C \alpha & \text{ if } 
\psi \in  \{ \psi_{HR}, \psi_{QO}\}.  
\end{cases}
\end{equation}
To see this, we get from the relation (here $s \geq 0$ arbitrary)
\[ \| \yd \|  = 
\|(A_\eta A_{\eta}^\ast + \alpha I)^{s} (A_\eta A_{\eta}^\ast + \alpha I)^{-s} \yd\| \leq 
\|(A_\eta A_{\eta}^\ast + \alpha I)^{s}\| \|(A_\eta A_{\eta}^\ast + \alpha I)^{-s} \yd\|   \]
the inequality 
\[\|(A_\eta A_\eta^\ast + \alpha I)^{-s} \yd\|  \geq \frac{\|\yd\|}{\|(A_\eta A_\eta^\ast + \alpha I)^{s}\|} \geq 
 \frac{c_0}{c_1^{s}}, \]
with $c_1 \geq \|A_\eta A_\eta^\ast + \alpha I\|$. This gives \eqref{lower} for $\psi_{HD}$ ($s=1$) and $\psi_{HR}$ ($s=\frac{3}{2}$). The estimate \eqref{lower} for $\psi_{QO}$ follows analogously: 
\[ \psi_{QO} = \alpha 
\|(A_{\eta}^\ast A_\eta  + \alpha I)^{-2} A_{\eta}^\ast \yd\|  \geq \alpha \frac{\|A_{\eta}^\ast\yd\|}{\|(\|A_{\eta}^\ast A_\eta + \alpha I)^{2}\|} \geq 
\alpha  \frac{c_0}{c_1^{2}}. \]
 
By the standard error estimate 
\[ \|\xades\| \leq \frac{\|\yd\|}{\sqrt{\alpha_\ast}}, \]
we find, for the case in which the compensating functional is chosen as in \eqref{psimx} using \eqref{lower} and \eqref{psiupper}
with $t\in\{1/2,1\}$ suited to $\psi$ according to \eqref{lower}, 
\begin{align}
&C \as^t - \Cpsima\eta 
\frac{\|\yd\|}{\sqrt{\as}} \leq 
\psim(\as,A_\eta,y_\delta) = 
\inf_{\alpha \in [\gamma,\alpha_{\text{max}}]}
\psim(\alpha,A_\eta,y_\delta)  \nonumber \\
&\leq \inf_{\alpha \in [\gamma,\alpha_{\text{max}}]}
\psi(\alpha,A_\eta,y_\delta) \leq
\inf_{\alpha \in [\gamma,\alpha_{\text{max}}]}
\left\{
\frac{\delta}{\sqrt{\alpha}} + 
(1+C_{p,q})\frac{\eta \|\xd\|}{\sqrt{\alpha}}
+ \psi(\alpha,A,A \xd) \right\} 
\nonumber \\
&\leq 
\inf_{\alpha \in [\gamma,\alpha_{\text{max}}]}
\left\{
\frac{\delta}{\sqrt{\alpha}} + 
+ \psi(\alpha,A,A \xd) \right\} 
+(1+C_{p,q})\frac{\eta \|\xd\|}{\sqrt{\gamma}}.  \label{upperpsim}
\end{align} Hence, 
\begin{align*}
&C \as^t \leq 
\inf_{\alpha \in [\gamma,\alpha_{\text{max}}]}
\left\{
\frac{\delta}{\sqrt{\alpha}} 
+ \psi(\alpha,A,A \xd) \right\} +
(C +\Cpsima )\frac{\eta}{\sqrt{\gamma}}\, .
\end{align*}
It is not difficult to verify the same estimate analogously for the
case in which the compensating functional is chosen according to \eqref{psima}.  

Inserting  the (nonoptimal) choice $\alpha = \delta + \gamma$ in the  infimum,
we obtain an upper bound that 
tends to $0$ 
as $\delta,\gamma \to 0$. By the hypothesis,
the last two terms vanish, thereby proving the desired result. 
\end{proof}

\begin{remark}
If $\alpha_\ast$ is the minimizer of $\psi(\alpha,A_\eta,y_\delta)$, then this functional is the same as
 \eqref{psimx} and/or \eqref{psima} with $D=0$ and one obtains the 
same result as above; namely, that $\alpha_\ast\to 0$ as $\delta,\eta\to 0$ provided that the
conditions in the lemma are fulfilled.
\end{remark}

%
Now, we can establish an estimate from above for the total error which is derived courtesy of a 
lower estimate of the parameter choice functional with the data error. Note that, due to Bakushinskii's veto,
this estimate cannot be derived without restricting the set of permissible noise \cite{veto}, e.g., 
by a noise condition. At first we study bounds for the functional in \eqref{semiheu}.

\begin{proposition}\label{prop2}
Let $\as$ be selected according to 
\eqref{semiheu} with $\psim$ as in \eqref{psimx}.
Suppose that for the noisy data $\yd$,
the noise condition \eqref{nc} 
is satisfied. Then, for $\eta$ sufficiently small, we get the following error estimate for all $\psi\in\{\psi_{HD},\psi_{HR},\psi_{QO}\}$:
\begin{equation}
\begin{aligned}
&
\|\xades -\xd\|     
\\&
\le (1-\Cpsima\eta C_{nc})^{-1} \bigg[C\frac{\eta\delta}{\alpha_\ast}+C_{nc}\inf_{\alpha\in[\gamma,\alpha_{\text{max}}]}\psim(\alpha,A_\eta,y_\delta)+\Cpsima C_{nc}\eta\|x^\dagger\| \\
&
\qquad \qquad  +C\frac{\eta}{\sqrt{\alpha_\ast}}\|x^\dagger\|+C\psi(\alpha_\ast,A,A\xd) +\as \|(A^* A + \as I)^{-1} \xd \| 
\bigg].
\end{aligned} \label{totalerror}
\end{equation}
\end{proposition}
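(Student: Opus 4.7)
The plan is to start from the standard three-term error decomposition
\[
\|\xades - \xd\| \leq \|\xades - \xads\| + \|\xads - \xas\| + \|\xas - \xd\|,
\]
in which $\|\xas - \xd\| = \as\|(A^\ast A + \as I)^{-1}\xd\|$ directly accounts for the last summand on the right-hand side of \eqref{totalerror}. The first term isolates the pure operator perturbation (same data, same $\alpha$); the second carries the data-noise contribution and will be attacked via the noise condition \eqref{nc}.

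For $\|\xades - \xads\|$ I would decompose $\yd = A\xd + (\yd - y)$. The noisy piece is bounded by $\|(A_\eta^\ast A_\eta + \as I)^{-1}A_\eta^\ast - (A^\ast A + \as I)^{-1}A^\ast\|\delta$, which by Lemma~\ref{lem:opest} at $p = \tfrac{1}{2}$, $q = -1$ is of order $\eta\delta/\as$. The exact piece is handled by writing $A\xd = A_\eta\xd - \DA\xd$ and combining the resolvent identity $B^\ast B (B^\ast B + \alpha I)^{-1} = I - \alpha (B^\ast B + \alpha I)^{-1}$ (applied for $B \in \{A, A_\eta\}$) to rewrite the difference as
\[
\as\bigl[(A^\ast A + \as I)^{-1} - (A_\eta^\ast A_\eta + \as I)^{-1}\bigr]\xd - (A_\eta^\ast A_\eta + \as I)^{-1}A_\eta^\ast \DA\xd,
\]
whose two summands are both $\mathcal{O}(\eta\|\xd\|/\sqrt{\as})$ by Lemma~\ref{lem:opest} ($p = 0$, $q = -1$) and the elementary bounds \eqref{est}.

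For $\|\xads - \xas\|$ the noise condition \eqref{nc} delivers $C_{nc}\,\psi(\as, A, \yd - y)$. To turn this into something minimisable, I would migrate the argument to $(A_\eta, \yd)$ in three short steps: (i) swap the filter $\Psi(\as, A) \to \Psi(\as, A_\eta)$ inside $\|\cdot(\yd - y)\|$, the induced error being $\mathcal{O}(\eta\delta/\as)$ by Lemma~\ref{lem:opest} applied to each of the three filters associated with $\psi_{HD}, \psi_{HR}, \psi_{QO}$; (ii) use the triangle inequality $\psi(\as, A_\eta, \yd - y) \leq \psi(\as, A_\eta, \yd) + \psi(\as, A_\eta, A\xd)$; (iii) bound the last summand by writing $A\xd = A_\eta\xd - \DA\xd$, using the operator-norm estimate $\|\Psi(\as, A_\eta)\| = \mathcal{O}(1/\sqrt{\as})$ on $\DA\xd$, and invoking Lemma~\ref{helplemma} \eqref{psi_eta} on $\psi(\as, A_\eta, A_\eta\xd)$. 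This yields altogether $\mathcal{O}(\eta\|\xd\|/\sqrt{\as}) + \psi(\as, A, A\xd)$.

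The final step is to insert the minimiser property: writing $\psi(\as, A_\eta, \yd) = \psim(\as, A_\eta, \yd) + \Cpsima\eta\|\xades\|$, using minimality over $[\gamma,\alpha_{\max}]$, and estimating $\|\xades\| \leq \|\xd\| + \|\xades - \xd\|$, the right-hand side acquires a term $\Cpsima C_{nc}\eta\|\xades - \xd\|$. Provided $\Cpsima C_{nc}\eta < 1$ (which is the ``$\eta$ sufficiently small'' assumption), this can be absorbed into the left-hand side to produce the factor $(1 - \Cpsima C_{nc}\eta)^{-1}$; collecting the previous contributions reproduces the stated six summands. The main obstacle, beyond the careful bookkeeping of the operator-error terms along the chain $\psi(\as, A, \yd - y) \rightsquigarrow \psim(\as, A_\eta, \yd)$, is the resolvent-identity manipulation in the analysis of $\|\xades - \xads\|$ which is required to extract the sharper rate $\eta\|\xd\|/\sqrt{\as}$ on the exact data part (rather than the loose $\eta\|\yd\|/\as$).
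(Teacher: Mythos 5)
Your proposal is correct and follows essentially the same route as the paper's proof: the paper merely organises the initial splitting differently (bounding $\|(\Ad^*\Ad+\as I)^{-1}[\Ad^*\yd-(\Ad^*\Ad+\as I)\xd]\|$ directly rather than via the three-term chain through $\xads$ and $\xas$), but all the key steps --- Lemma~\ref{lem:opest} for the operator-perturbation terms, the noise condition applied to $\|(A^*A+\as I)^{-1}A^*(\yd-y)\|$, the migration $\psi(\as,A,\yd-y)\rightsquigarrow\psim(\as,A_\eta,\yd)$ via the triangle inequality and \eqref{psi_eta}, and the absorption of $\Cpsima C_{nc}\eta\|\xades-\xd\|$ into the left-hand side --- coincide with the paper's. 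No gaps.
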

\begin{proof}
We begin by estimating the terms: 
\begin{align*} 
&\|\xades -\xd\|  = 
\|(\Ad^* \Ad + \as I)^{-1}\Ad^* \yd   -
(\Ad^* \Ad + \as I)^{-1} (\Ad^* \Ad + \as I)
\xd \|  \\
& \leq 
\|(\Ad^* \Ad + \as I)^{-1}\left[ \Ad^* \yd  - \Ad^* \Ad \xd - \as \xd \right] \|  \\
&  \leq \|(\Ad^* \Ad + \as I)^{-1}\left[\Ad^*(\yd -y) + \Ad^*(A -\Ad)\xd\right]\| + 
\as \|(\Ad^* \Ad + \as I)^{-1} \xd \|  \\
& \leq  \|(\Ad^* \Ad + \as I)^{-1}\Ad^*(\yd -y)\| + 
\frac{\eta}{2\sqrt{\as}} \|\xd\|   + \as \|(\Ad^* \Ad + \as I)^{-1} \xd \|.  
\end{align*}
By \eqref{operatorerrorestimates}, the last term can be bounded by 
\begin{align*}   \as \|(\Ad^* \Ad + \as I)^{-1} \xd \|  &\leq 
 \as \|\left[(\Ad^* \Ad + \as I)^{-1} -  (A^* A + \as I)^{-1} \right]\xd  \| \\
 & \qquad \qquad \qquad \qquad 
+ \as \|(A^* A + \as I)^{-1} \xd \|  \\
& \leq C_{0,-1} \frac{\eta \|\xd\|}{\sqrt{\alpha_\ast}} +
 \as \|(A^* A + \as I)^{-1} \xd \|. 
 \end{align*}
This leaves the remaining term:
\begin{align*}
&\|(\Ad^* \Ad + \as I)^{-1}\Ad^*(\yd -y)\|\\
&\leq 
\|\left[(\Ad^* \Ad + \as I)^{-1}\Ad^* - (A^* A + \as I)^{-1}A^*\right] (\yd -y)\|  \\
& \qquad \qquad \qquad + 
\|(A^* A + \as I)^{-1}A^* (\yd -y)\|  \\
& \leq \frac{5 \eta \delta }{4 \as} + 
C_{nc} \psi(\as,A,\yd-y). 
\end{align*}
Combining the noise condition with the operator error estimates \eqref{operatorerrorestimates}, \eqref{operatorerrorestimates2} 
we obtain
\begin{align*}
&\|(\Ad^* \Ad + \as I)^{-1}\Ad^*(\yd -y)\|
 \leq 
  \frac{5 \eta \delta }{4 \as} + 
 C_{nc} \psi(\as,A_\eta,\yd-y) + 
 C_{nc} C_ {p,q} \frac{\delta \eta}{\as} \\
 & \leq 
   \frac{(5 +  C_{nc} C_ {p,q} ) \eta \delta }{4 \as}
 +  C_{nc} \psi(\as,A_\eta,\yd) + 
  C_{nc}  \psi(\as,A_\eta,y) \\
  &\leq 
     \frac{(5 +  C_{nc} C_ {p,q} ) \eta \delta }{4 \as} + 
 C_{nc}\psim(\as,A_\eta,\yd) + 
  \Cpsima C_{nc} \eta \|\xades\| +  C_{nc}
   \psi(\as,A_\eta,Ax^\dagger) \\
&  \leq 
     \frac{C \eta \delta }{ \as} + 
 C_{nc} \inf_{\alpha \in [\gamma,\alpha_{\text{max}}]} \psim(\alpha,A_\eta,\yd) + 
  \Cpsima C_{nc} \eta \|\xades -\xd\| +
    \Cpsima C_{nc} \eta \|\xd\| \\
    & \qquad + C_{nc}  \psi(\as,A_\eta,(A_\eta+A-A_\eta)x^\dagger) \\
& \leq 
     \frac{C \eta \delta }{ \as} + 
 C_{nc} \inf_{\alpha \in [\gamma,\alpha_{\text{max}}]} \psim(\alpha,A_\eta,\yd) + 
  \Cpsima C_{nc} \eta \|\xades -\xd\| +
    \Cpsima C_{nc} \eta \|\xd\|\\
    & \qquad \qquad +
    C_{nc}\psi(\alpha_\ast,A_\eta,A_\eta x^\dagger)+C_{nc}\psi(\alpha_\ast,A_\eta,(A-A_\eta)x^\dagger).
 \end{align*}
 The last terms can be bounded using standard error estimates by 
 \begin{align*} 
 \psi(\alpha_\ast,A_\eta,(A-A_\eta)x^\dagger) \leq \frac{1}{\sqrt{\alpha}} 
 \|(A-A_\eta)x^\dagger\| = \frac{\eta \|\xd\|}{\sqrt{\alpha}}, 
\end{align*}  
 while for the other term we employ
 \eqref{operatorerrorestimates} and \eqref{psi_eta}
 \begin{align*}
     \psi(\alpha_\ast,A_\eta,A_\eta x^\dagger) \leq 
     C_{p,q} \frac{\eta \|\xd\|}{\sqrt{\alpha}}  + 
          \psi(\alpha_\ast,A,A x^\dagger). 
 \end{align*}
Hence, for all $\psi\in\{\psi_{HD},\psi_{HR},\psi_{QO}\}$, we obtain 
\begin{align*}
&(1-\Cpsima \eta C_{nc}) \|\xades -\xd\|     \\
&\le\frac{C\eta\delta}{\alpha_\ast}+C_{nc}\inf_{\alpha\in[\gamma,\alpha_{\text{max}}]}\psim(\alpha,A_\eta,y_\delta)+\Cpsima C_{nc}\eta\|x^\dagger\| 
\\
&
+C\frac{\eta}{\sqrt{\alpha_\ast}}\|x^\dagger\|+C\psi(\alpha_\ast,A,A \xd)
+\as \|(A^* A + \as I)^{-1} \xd \|. 
 \end{align*}
\end{proof}
%

The proof is easily adapted to obtain a  
similar  proposition for the alternative choice of compensating functional as in 
\eqref{psima}:
\begin{proposition}
Let the assumptions of the Proposition~\ref{prop2} hold. 
Let $\as$ be selected according to 
\eqref{semiheu} with $\psim$ as in \eqref{psima}. Then for $\eta$ sufficiently small, we get
\begin{equation}
\begin{aligned}
&\|\xades -\xd\|     \\
&\le  C\frac{\eta\delta}{\alpha_\ast}+C_{nc}\inf_{\alpha\in[\gamma,\alpha_{\text{max}}]}\psim(\alpha,A_\eta,y_\delta)+C_{nc}C \frac{\eta}{\sqrt{\as}} \\
&
\qquad \qquad  +C\frac{\eta}{\sqrt{\alpha_\ast}}\|x^\dagger\|+C\psi(\alpha_\ast,A,A\xd) 
+\as \|(A^* A + \as I)^{-1} \xd \| .
\end{aligned} \label{totalerror1}
\end{equation}
\end{proposition}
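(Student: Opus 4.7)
The plan is to mirror the proof of Proposition~\ref{prop2} step by step, and to identify the single place where the compensating functional enters non-trivially. All estimates based only on Lemma~\ref{lem:opest} and the noise condition \eqref{nc} are independent of whether (SH1) or (SH2) is used, so they can simply be recycled.

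First I would reproduce the decomposition
\[ \|\xades -\xd\|   \leq  \|(\Ad^* \Ad + \as I)^{-1}\Ad^*(\yd -y)\| + \frac{\eta}{2\sqrt{\as}} \|\xd\|  + \as \|(\Ad^* \Ad + \as I)^{-1} \xd \| \]
and bound the last term via \eqref{operatorerrorestimates} by $C_{0,-1}\frac{\eta\|\xd\|}{\sqrt{\as}} + \as\|(A^*A+\as I)^{-1}\xd\|$, exactly as in Proposition~\ref{prop2}. For the leading term I would invoke \eqref{operatorerrorestimates} to replace $(\Ad^*\Ad+\as I)^{-1}\Ad^*$ by $(A^*A+\as I)^{-1}A^*$ (picking up the familiar $\frac{C\eta\delta}{\as}$ contribution), apply the noise condition \eqref{nc} to produce $C_{nc}\psi(\as,A,\yd-y)$, and then return to the perturbed operator via \eqref{operatorerrorestimates2} to arrive at $C_{nc}\psi(\as,A_\eta,\yd-y)$ plus another $\frac{C\eta\delta}{\as}$ contribution. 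Splitting $\psi(\as,A_\eta,\yd-y)\le \psi(\as,A_\eta,\yd)+\psi(\as,A_\eta,y)$ is also unchanged.

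The only genuinely new step is how to re-express $\psi(\as,A_\eta,\yd)$ through $\psim$. For the choice \eqref{psima} we simply have
\[ \psi(\as,A_\eta,\yd) = \psim(\as,A_\eta,\yd) + D\,\frac{\eta}{\sqrt{\as}}, \]
so using minimality of $\as$ and $\gamma\le\as$ yields $\psi(\as,A_\eta,\yd)\le \inf_{\alpha\in[\gamma,\alpha_{\max}]}\psim(\alpha,A_\eta,\yd) + D\frac{\eta}{\sqrt{\as}}$. Crucially, unlike the (SH1) case, no term involving $\|\xades\|$ appears, so there is no need to absorb a $D\eta C_{nc}\|\xades-\xd\|$ contribution into the left-hand side. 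This is precisely why the factor $(1-D\eta C_{nc})^{-1}$ is absent from \eqref{totalerror1}, and is also why the smallness condition on $\eta$ serves only to make the compensated functional well-behaved rather than to invert a coefficient.

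The remaining term $\psi(\as,A_\eta,y)=\psi(\as,A_\eta,A\xd)$ is treated exactly as in Proposition~\ref{prop2}: write $A=A_\eta+(A-A_\eta)$, bound $\psi(\as,A_\eta,(A-A_\eta)\xd)\le \frac{\eta\|\xd\|}{\sqrt{\as}}$ by the standard filter-norm estimate, and control $\psi(\as,A_\eta,A_\eta\xd)$ by $C_{p,q}\frac{\eta\|\xd\|}{\sqrt{\as}}+\psi(\as,A,A\xd)$ via \eqref{psi_eta}. Collecting all contributions gives \eqref{totalerror1}. There is no real obstacle; the only subtlety is the bookkeeping associated with the different form of the compensating term, and the key observation is that \eqref{psima} contributes additively on the right, whereas \eqref{psimx} required the absorption trick.
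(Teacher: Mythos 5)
Your proposal is correct and follows essentially the same route the paper intends: the paper itself only states that the proof of Proposition~\ref{prop2} is ``easily adapted,'' and your adaptation is exactly the right one, with the key observation that the compensating term $D\eta/\sqrt{\alpha}$ in \eqref{psima} enters additively via $\psi(\as,A_\eta,\yd)=\psim(\as,A_\eta,\yd)+D\eta/\sqrt{\as}$, producing the $C_{nc}C\,\eta/\sqrt{\as}$ term and removing the need for the absorption step and hence the prefactor $(1-\Cpsima\eta C_{nc})^{-1}$.
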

Note that the setting $D=0$ in the previous propositions yields upper bounds for the 
total errors in the case of employing the unmodified heuristic rules.


Thus, with the estimate above, we can prove the desired convergence theorem providing that certain conditions are satisfied:
\begin{theorem}\label{theorem1}
	Let $\alpha_\ast$ be selected as in \eqref{semiheu}. Suppose that the noise condition \eqref{nc} and the 
	conditions of Lemma~\ref{lemma3} are satisfied and furthermore suppose 
	that $\gamma\in[0,\alpha_{\text{max}}]$ satisfies 
\[
 \frac{\eta}{\gamma} \leq C  
\qquad \mbox{ as } \eta \to 0, \] 
where $C$ is a  constant. 
	Then
	\[ \|x_{\alpha_\ast,\delta,\eta}-x^\dagger\|\to 0 \]
	as $\delta,\eta\to 0$.
\end{theorem}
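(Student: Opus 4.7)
The plan is to invoke Proposition~\ref{prop2} (or its (SH2) analogue) and then show that every summand in the bracketed expression of \eqref{totalerror} tends to $0$ as $\delta,\eta \to 0$. The prefactor $(1-\Cpsima \eta C_{nc})^{-1}$ is eventually bounded (by, say, $2$) since $\eta \to 0$, so it can be absorbed into constants.

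By Lemma~\ref{lemma3} the hypotheses yield $\as \to 0$, which immediately forces $\psi(\as, A, A\xd) \to 0$ (by the remark following Lemma~\ref{helplemma}) and $\as \|(A^* A + \as I)^{-1} \xd \| \to 0$ (the classical vanishing of the Tikhonov approximation error as the regularisation parameter goes to zero). The term $\Cpsima C_{nc} \eta \|\xd\|$ is trivially $o(1)$, and the term $C \eta \|\xd\| / \sqrt{\as}$ is controlled via the lower bound $\as \geq \gamma$ together with the hypothesis $\eta/\sqrt{\gamma} \to 0$ inherited from Lemma~\ref{lemma3}. The term $C\eta\delta / \as$ is precisely where the new assumption of the theorem is used: since $\as \geq \gamma$, we have $\eta\delta/\as \leq (\eta/\gamma)\,\delta \leq C\delta \to 0$; the weaker bound $\eta/\sqrt{\gamma} \to 0$ would not suffice here, which motivates the strengthened $\eta/\gamma \leq C$.

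The remaining contribution $C_{nc} \inf_{\alpha \in [\gamma,\alpha_{\text{max}}]} \psim(\alpha, A_\eta, \yd)$ is the delicate one. I would reuse the chain of inequalities~\eqref{upperpsim} from the proof of Lemma~\ref{lemma3} to bound
\[
\inf_{\alpha \in [\gamma, \alpha_{\text{max}}]} \psim(\alpha, A_\eta, \yd) \;\leq\; \inf_{\alpha \in [\gamma, \alpha_{\text{max}}]} \left\{\frac{\delta}{\sqrt{\alpha}} + \psi(\alpha, A, A\xd)\right\} + (1+C_{p,q})\frac{\eta \|\xd\|}{\sqrt{\gamma}}.
\]
The additive $\eta/\sqrt{\gamma}$ summand vanishes by hypothesis. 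For the infimum itself, I would insert the non-optimal test value $\alpha = \delta + \gamma$ (admissible for sufficiently small $\delta, \eta$), producing the upper estimate $\sqrt{\delta} + \psi(\delta + \gamma, A, A\xd)$, which tends to $0$ since $\delta, \gamma \to 0$ and the $\psi$-term is a standard Tikhonov approximation quantity at a vanishing parameter.

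The main obstacle is essentially bookkeeping: each summand relies on a slightly different rate assumption, and one must verify that the conditions $\eta/\gamma \leq C$ together with those inherited from Lemma~\ref{lemma3} collectively dominate every contribution. In particular, it is precisely the $\eta\delta/\as$ term that necessitates the stronger bound on $\eta/\gamma$ in the theorem's hypotheses rather than only $\eta/\sqrt{\gamma}\to 0$. The (SH2) case follows identically via the companion proposition: the $\Cpsima C_{nc}\eta\|\xd\|$ contribution is replaced by $C_{nc}\Cpsima\,\eta/\sqrt{\as}$, which is handled by the same $\eta/\sqrt{\gamma}\to 0$ argument used above.
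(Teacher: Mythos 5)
Your proposal is correct and follows essentially the same route as the paper's own (much terser) proof: invoke Proposition~\ref{prop2}, use $\as\ge\gamma$ together with $\eta/\gamma\le C$ to kill the $\eta\delta/\as$ term, use Lemma~\ref{lemma3} for the approximation-error terms, and bound the infimum via \eqref{upperpsim} with the test value $\alpha=\delta+\gamma$. Your observation that the $\eta\delta/\as$ term is precisely what forces the strengthened hypothesis $\eta/\gamma\le C$ is a correct and useful piece of bookkeeping that the paper leaves implicit.
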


\begin{proof}
Since we have that $\as \geq \gamma$, the conditions in the theorem 
imply that 
$\tfrac{\eta \delta}{\gamma} \to 0,$
$\tfrac{\eta}{\sqrt{\gamma}} \to 0$.
The terms with $\psi(\alpha_\ast,A,A\xd)$ and  
$\as \|(A^* A + \as I)^{-1} \xd \| $ vanish by standard arguments because 
$\as \to 0$ according to Lemma~\ref{lemma3}. 
Finally, 
$\inf_{\alpha\in[\gamma,\alpha_{\text{max}}]}\psim(\alpha,A_\eta,y_\delta)$
tends to $0$ because of \eqref{upperpsim} and we may take 
an appropriate choice for $\alpha$ in the infimum.
\end{proof}

\begin{remark}
Note that one might use more general functionals than those in 
\eqref{psimx} and \eqref{psima} by replacing $\eta$ with $\eta^s$, $s \in (0,1)$. 
Still, in this case, similar convergence results are valid with a slightly 
adapted choice of $\gamma$ (depending on $s$).  
However, we observed through some numerical experimentation that $s=1$ appeared to be  
a natural choice, which is fully in line with  our motivation that the compensating term 
should represent the error in $\psi$ due to operator perturbations.

We further remark that the unmodified heuristic choice (i.e., with $D=0$), 
stipulating the same condition as in the 
previous theorem, also yields convergence as the errors  tend to zero.
However, as will be observed in Section~\ref{sec:4}, the modified rules represent a 
substantial improvement. 
\end{remark}

\section{Numerical experiments}\label{sec:4}
In this section, we test the numerical performance of the various modified functionals, 
$\psim$, on a series of test problems.  We provide two types of experiments: one with 
random operator noise and the other with a smooth operator perturbation.
Note that heuristic rules can fail in the case of smooth 
errors that do not satisfy a noise condition. Thus, a smooth operator perturbation is the 
most critical case for heuristic rules, and, as we will observe, the
semi-heuristic methods will prove to be more effective  in that case.

For each of the proposed heuristic rules, 
we compute the relative  error with respect to the selected regularisation parameter $\alpha_\ast$
\begin{equation*}
    \eii:=\frac{\|x_{\alpha_\ast,\delta,\eta}-x^\dagger\|}{\|x^\dagger\|},
\end{equation*}
and the error obtained by the 
theoretically optimal choice of $\alpha$
\begin{equation*}
    \eopt:=\frac{\|x_{\alpha_{\text{opt}},\delta,\eta}-x^\dagger\|}{\|x^\dagger\|}, 
    \qquad \alpha_{\text{opt}}:=\argmin_{\alpha}\|x_{\alpha,\delta,\eta}-x^\dagger\|.
\end{equation*}
Furthermore, we denote the ratio of these errors by 
\[\ei:=\frac{\|x_{\alpha_\ast,\delta,\eta}-x^\dagger\|}{\|x_{\alpha_{\text{opt}},\delta,\eta}-x^\dagger\|}.
\]

Note that in our simulations, we are afforded the luxury of knowing $\xd$, thereby allowing us to 
minimise the error and compute $\alpha_{\text{opt}}$ and $\eopt$.

For the standard heuristic rules, we search for $\alpha\in[\lambda_{\text{min}},\|A\|^2]$, 
where $\lambda_{\text{min}}$ is the minimum eigenvalue of the matrix $A^\ast A$. (However, if $\lambda_{\text{min}}$ is below $10^{-14}$, then we choose $\alpha_{\text{min}}=10^{-14}$ to avoid numerical instabilities). In some cases of large operator noise, the heuristic rules selected $\alpha_\ast=\alpha_{\text{max}}$; thus in this situation, we select the parameter corresponding to the smallest interior local minimum. For the semi-heuristic rules, however, we restrict our search to the interval $[\gamma,\alpha_{\text{max}}]$, where $\gamma=O(\eta)$ as above. Furthermore, in each experiment, we have scaled the operator and the exact solution so that $\|A\|=1$ and $\|x^\dagger\|=1$.

For numerical comparisons of standard heuristic rules in the absence of operator noise, we refer to \cite{BauLuk,hampalmraus,hapara11,HaHa,RaHa}.

\subsection{Gau{\ss}ian operator noise perturbation}
\paragraph{Tomography operator perturbed by Gau{\ss}ian operator.}
In this experiment, we use the \texttt{tomo} package from Hansen's 
Regularisation Tools~\cite{hansentool}
to define the finite-dimensional operator
(i.e., matrix) $A_\eta\in\mathbb{R}^{n\times n}$, where $A_\eta=A+C\Delta A$, with $A$
the tomography operator, which is a penetration of a two dimensional domain by rays in random directions. 
We use random Gau{\ss}ian distributed operator noise, i.e., 
$\Delta A\in\mathbb{R}^{n\times n}$ is a matrix 
with random entries. 
The data noise is defined as $\delta=C\|\epsilon\|$, where $\epsilon\in\mathbb{R}^n$ is a
Gau{\ss}ian distributed noise vector.

In the following configuration, we set $n=625$ and $\texttt{f}=1$, according to Hansen's Tools.

We provide a dot plot, namely Figure~\ref{tomdot1}, in which we compare the error $\eii$
according to the relative error function
for each parameter choice rule and for 100 different realisations of data errors 
and operator perturbations with values of $\delta$ and $\eta$ 
ranging from 1\% to 10\%. Each asterisk in the plot corresponds to the relative error, $\eii$, for a realisation of operator and data noise. 

Note that ``semi-heuristic rule 1'' and ``semi-heuristic 
rule 2'' in Figure~\ref{tom1} refer to the modified rules with   $\eta\|x_{\alpha,\delta,\eta}\|$,
cf.~\eqref{psimx}, 
and $\eta/\sqrt{\alpha}$, cf.~\eqref{psima}, as compensating functionals, respectively. Recall that the standard heuristic rules (in blue) correspond to the semi-heuristic rules with $D=0$ and search for a parameter $\alpha$ in the interval $[\lambda_{\text{min}},\|A\|^2]$. The last row in the plot is then the dot plot of the relative error for the optimal choice of $\alpha$, namely $e_{\text{opt}}$. In each row, the green circles represent the median of the respective relative errors over the 100 realisations. 

\begin{figure}
	\centering
	\includegraphics[scale=.8]{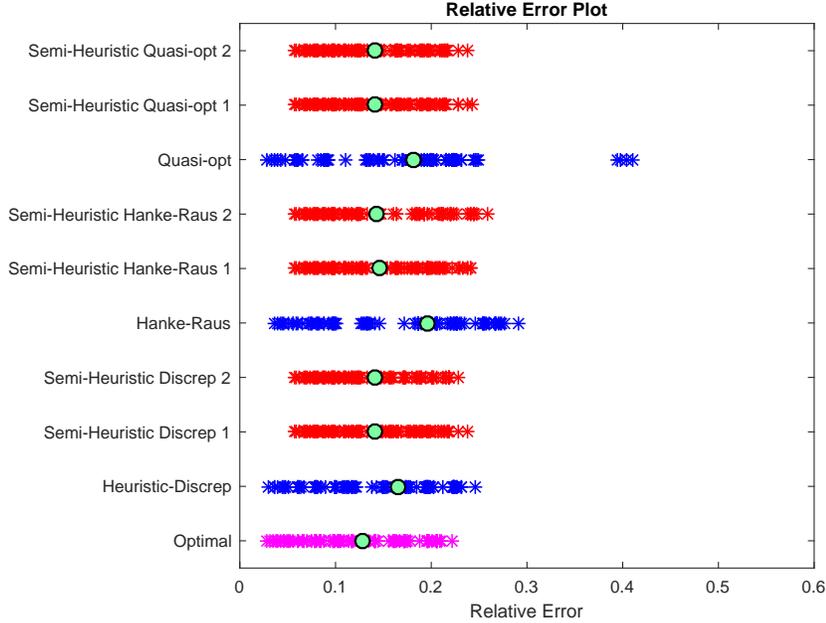}
	\caption{Tomography operator perturbed by random operator: $D=600$ for SH1, $D=0.05$ for SH2 and $\gamma=0.005\times\eta$.}
	\label{tomdot1}
\end{figure}

We see that the semi-heuristic rules present a noticeable improvement for all parameter choice rules, although the discrepancy in performance seems to be slightly more pronounced for the quasioptimality and Hanke-Raus rules.

We also compare
the difference between the values of $\ei$ with respect to the modified parameter choice rule
and its unmodified counterpart, respectively as a percentage.
For example, for any configuration of data and operator noise, we would compute 
the value 
\begin{equation}\label{defzeta}   \val(\delta,\eta) =  (\ei-\bei)\times 100,  \end{equation} 
where $\ei$ and $\bei$ 
denote the error ratio for the standard heuristic rule (i.e., $D = 0$ and $\alpha_{\text{min}}=\lambda_{\text{min}}$) 
and the modified rule \eqref{psimx} or  \eqref{psima}, 
respectively. This value is computed for several noise-levels $\delta$ and operator error levels 
$\eta$. Note that positive values indicate that the semi-heuristic rules outperform their heuristic counterparts and vice versa.

\begin{figure}
	\centering
	\includegraphics[scale=.8]{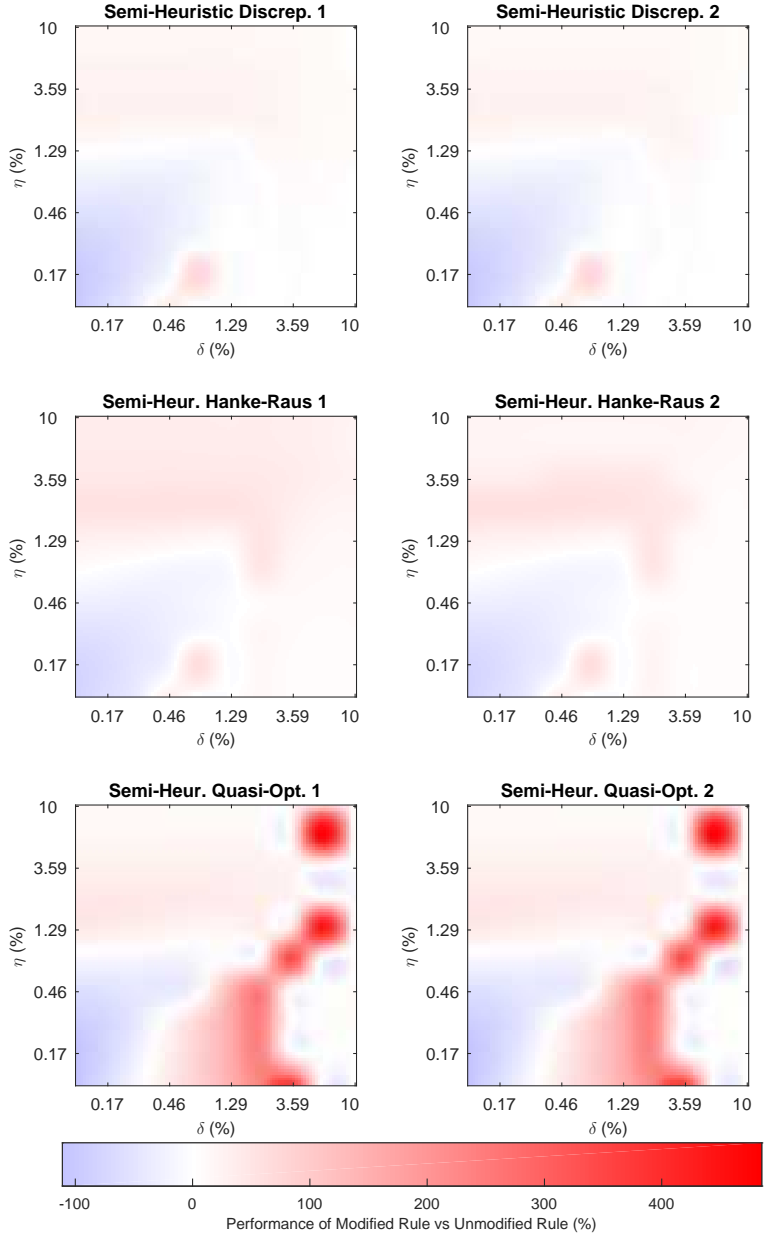}
	\caption{Tomography operator perturbed by random operator: set-up identical to Figure~\ref{tomdot1}. Red indicates that the semi-heuristic rules perform better than their standard heuristic counterparts and vice versa.}
	\label{tom1}
\end{figure}

The plots of Figure~\ref{tom1} indicate that the semi-heuristic rules do not necessarily offer improvements for small data and operator noise, but exhibit increased performance for larger noise of both aforementioned varieties. In particular, this is more pronounced for the quasioptimality rule where we may observe blotches of dark red which indicate significant improvement over the standard heuristic rule. 

The standard heuristic rules also performed reasonably well and a possible explanation could be the argumentation 
for the use of the compensating functional was based on the regularity of the operator noise and 
therefore it is probable that the irregularity of the operator noise in this scenario did not aid the
premise of using the modified rules.

\begin{figure}
	\centering
	
\end{figure}

\subsection{Smooth Operator Perturbation}
\paragraph{Fredholm integral operator perturbed by heat operator}
To simulate a deterministic, possibly smooth, operator perturbation, we first consider the 
Fredholm integral operator of the first kind perturbed by a heat operator, which we think is an instance
where the noise condition for $A_\eta$ might fail and where a semi-heuristic modification 
is highly advisable. 

For the implementation, 
we use the \texttt{baart} and \texttt{heat} packages on Hansen's Regularization Tools to 
define the finite dimensional operator $A_\eta\in\mathbb{R}^{n\times n}$, with $n=400$, where 
$A_\eta=A+C \Delta A$ is the superposition of the \texttt{baart} operator and scaled 
heat operator, respectively. More precisely, the \texttt{baart} operator is the discretisation of a Fredholm integral equation of the first kind with kernel $K_1:(s_1,t_1)\mapsto\exp(s_1\cos t_1)$, where $s_1\in[0,\pi/2]$, $t_1\in[0,\pi]$, and the heat operator is taken to be the Volterra integral operator with kernel $K_2:(s_2,t_2)\mapsto k(s_2-t_2)$, where
\[k(t):=\frac{t^{-\frac{3}{2}}}{2\sqrt{\pi}}\exp\left(-\frac{1}{4t}\right), \]
for $t\in[0,1]$. The exact solution is given by $y(s)=2\sin s/s$ and the data noise is defined as before. 

We proceed similarly as in the previous experiment.

\begin{figure}
	\centering
	\includegraphics[scale=.8]{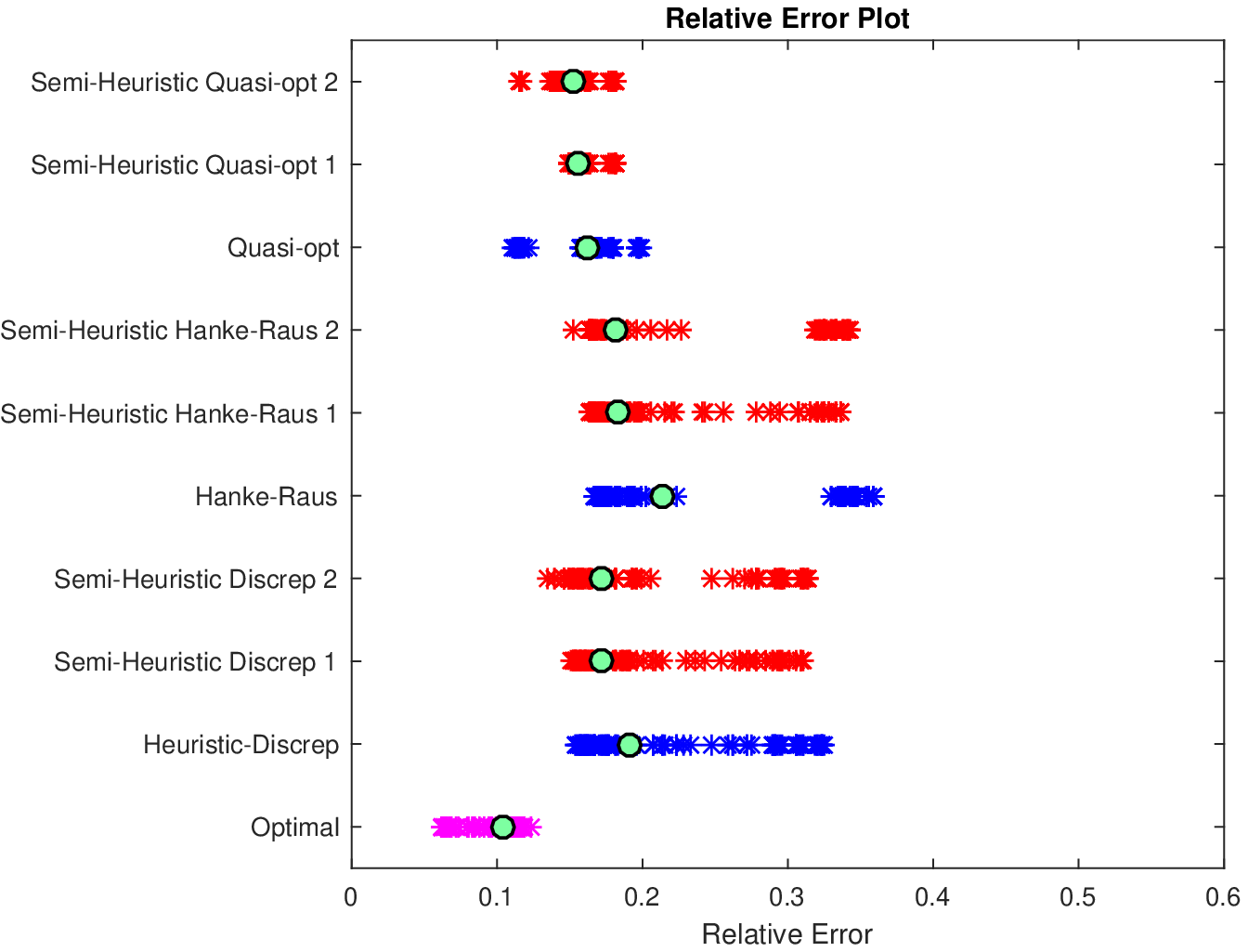}
	\caption{Fredholm operator of the first kind perturbed  by heat operator: $D=600$ for SH1, $D=0.12$ for SH2 and $\gamma=0.07\times\eta$.}
	\label{heatdot1}
\end{figure}

In Figure~\ref{heatdot1}, we observe that the best performing rule is in fact the semi-heuristic quasioptimality rule (SH1). The semi-heuristic variants of the Hanke-Raus and heuristic discrepancy rules are also improvements on the original rules, although this is slightly more pronounced for the semi-heuristic Hanke-Raus rules.

\begin{figure}
	\centering
	\includegraphics[scale=.8]{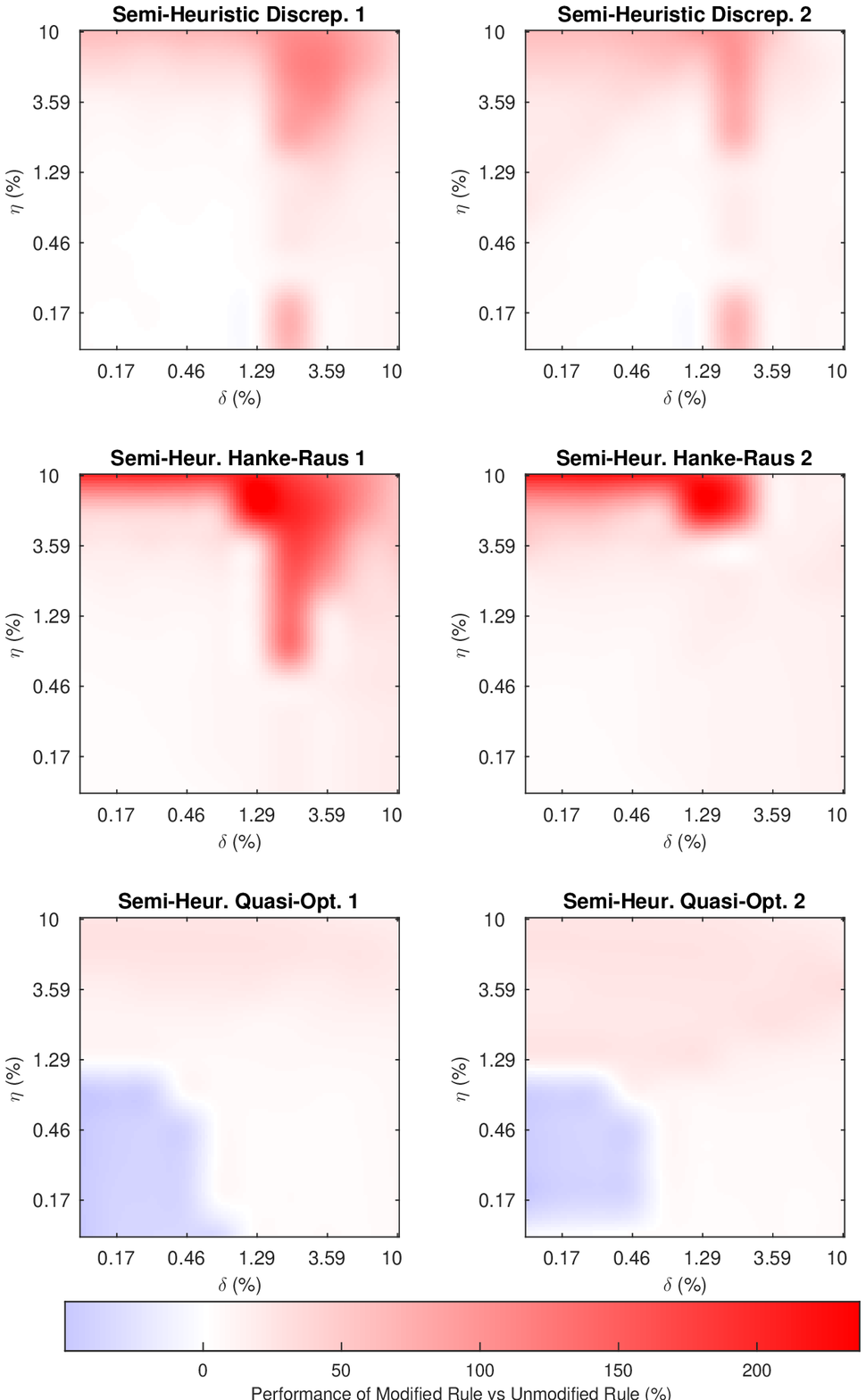}
	\caption{Fredholm operator of the first kind perturbed  by heat operator: set-up identical to Figure~\ref{heatdot1}. Red indicates that the semi-heuristic rules perform better than their standard heuristic counterparts and vice versa.}
	\label{heat1}
\end{figure}

In Figure~\ref{heat1}, the plots for the heuristic discrepancy and Hanke-Raus rules demonstrate that the semi-heuristic rules offer an overall improvement for all ranges of operator and data noise. However, we observe that the semi-heuristic quasioptimality rules performs slightly worse for small data and operator noise, but exhibit much better performance when both the mentioned noises are larger. Additionally, one may also observe that the semi-heuristic Hanke-Raus rules perform significantly better than their standard heuristic counterparts for very large operator noise.

\paragraph{Blur operator perturbed by tomography operator}

In a next experiment, we again simulate a deterministic operator perturbation by considering the \texttt{blur} operator from Hansen's tools and perturbing it by the tomography operator from before. For the blur operator, we set $\texttt{band}=8$ and $\texttt{sigma}=0.9$, which is modelled by the Gau{\ss}ian point spread function:
\[
h(x,y)=\frac{1}{2\pi\sigma^2}\exp\left(-\frac{x^2+y^2}{2\sigma^2}\right).
\]
\begin{figure}
	\centering
	\includegraphics[scale=.8]{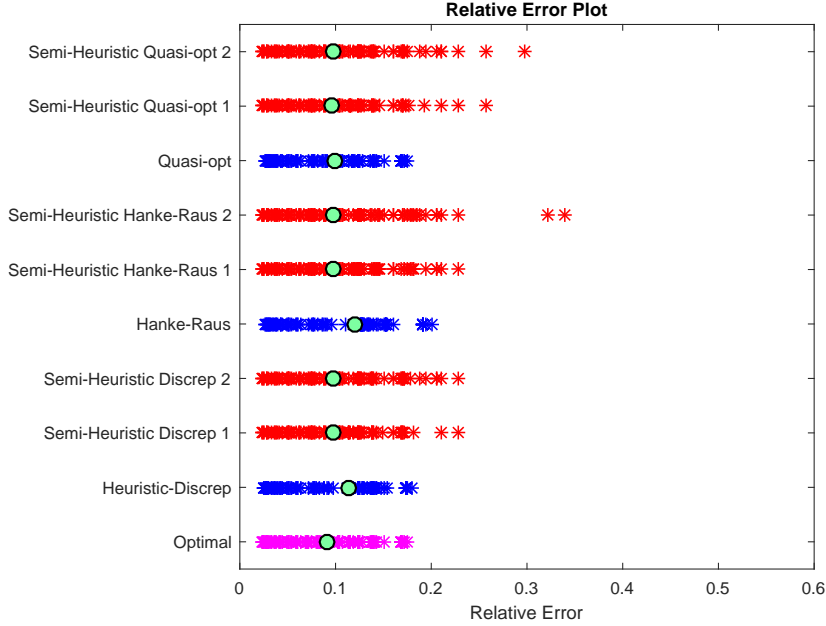}
	\caption{Blur operator perturbed by tomography operator: $D=500$ for SH1, $D=0.2$ for SH2 and $\gamma=0.01\times\eta$.}
	\label{blurdot1}
\end{figure}

In Figure~\ref{blurdot1}, we observe as before that the semi-heuristic rules exhibit improvements over their standard counterparts for the heuristic discrepancy and Hanke-Raus rules, although the standard quasioptimality rule performs quite well and in this case, its semi-heuristic variants do not necessarily present a better choice.

\begin{figure}
	\centering
	\includegraphics[scale=.8]{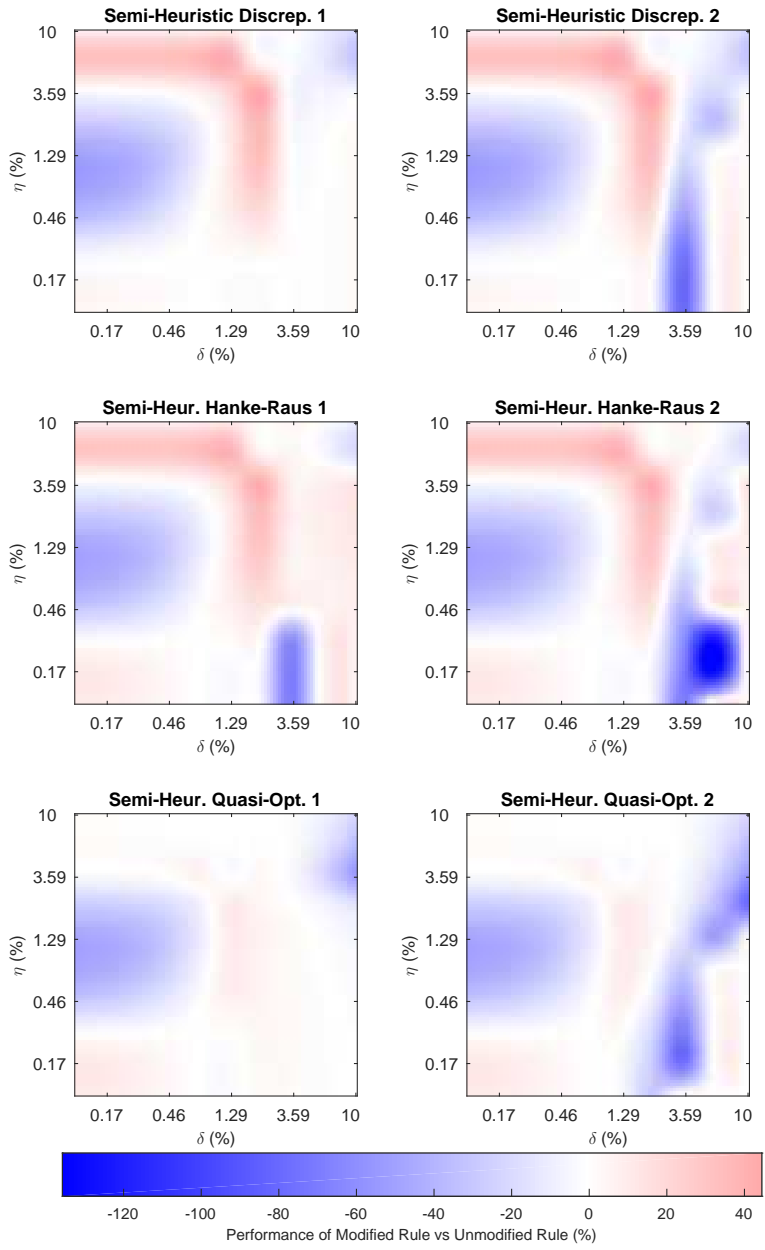}
	\caption{Blur operator perturbed by tomography operator: set-up identical to Figure~\ref{blurdot1}. Red indicates that the semi-heuristic rules perform better than their standard heuristic counterparts and vice versa.}
	\label{blur1}
\end{figure}

In Figure~\ref{blur1}, it is difficult to draw any meaningful conclusions, although it seems that for large operator noise and reasonable data noise, the semi-heuristic discrepancy and Hanke-Raus rules perform better than the standard heuristic rules. Consequently, one may conclude that for many situations, the semi-heuristic rules offer an improvement on their standard heuristic counterparts.

Note that in all experiments,  the minimiser in the range $[\lambda_{\text{min}},\alpha_{\text{max}}]$ of the standard heuristic functionals was occasionally $\alpha_{\text{max}}$; particularly when the operator noise was large. Note that we rectified this failure by the interior minima search as described above. Had we not rectified this failure, the improvement of the semi-heuristic methods would have been even greater pronounced.

\section{Conclusion}
In this paper, we presented a modification of the standard heuristic parameter choice 
rules in the case of a known bound on 
the operator
perturbation but unknown data noise-level. 
In particular, the modifications were two-fold: the introduction of a compensating function and an
appropriately selected lower bound, the motivations for which have been covered.

We proved convergence of the modified rules as
the data and operator errors tend to zero provided that the noise condition holds and the lower bound of the regularisation parameter
satisfies certain condition.


The numerical experiments confirmed that the semi-heuristic methods may yield an improvement over the standard parameter choice rules in many situations. Incidentally, the optimal choices of $D$ and $\gamma$ presents room for further research.

\section*{Acknowledgements}
This work was supported by the Austrian Science
Fund (FWF) project P 30157-N31. The research of U. H\"amarik and U. Kangro was supported by institutional research funding IUT20-57 of the Estonian Ministry of Education and Research.

\bibliographystyle{siam}
\bibliography{lib}

\end{document}